\title{Packing and counting arbitrary Hamilton cycles in random digraphs}
\author{Asaf Ferber\thanks{Department of Mathematics, Yale University, and Department of Mathematics, MIT, USA.\newline \hspace*{1.5em} Email: asaf.ferber@yale.edu,
and ferbera@mit.edu.}\and Eoin Long \thanks{School of Mathematical Sciences, Tel Aviv University, Tel Aviv, Israel. Email:
eoinlong@post.tau.ac.il.}}
\date{\today}
\theoremstyle{plain}
\newtheorem{theorem}{Theorem}[section]
\newtheorem{lemma}[theorem]{Lemma}
\newtheorem{corollary}[theorem]{Corollary}
\newtheorem{conjecture}[theorem]{Conjecture}
\newcommand{\Bin}{\ensuremath{\textrm{Bin}}}
\begin{document}
\maketitle

\begin{abstract}
    We prove packing and counting theorems for arbitrarily oriented
    Hamilton cycles in ${\cal D}(n,p)$ for nearly optimal $p$ (up to a $\log ^cn$ factor). In particular, we show that
    given $t = (1-o(1))np$ Hamilton cycles $C_1,\ldots ,C_{t}$, each of which is oriented arbitrarily, a digraph
    $D \sim {\cal D}(n,p)$ w.h.p. contains edge disjoint copies of $C_1,\ldots ,C_t$, provided $p =\omega(\log ^3 n/n)$. We also show that given an
    arbitrarily oriented $n$-vertex cycle $C$, a random digraph
    $D \sim {\cal D}(n,p)$ w.h.p. contains $(1\pm o(1))n!p^n$ copies of $C$,
    provided $p \geq \log ^{1 + o(1)}n/n$.
\end{abstract}

\section{Introduction}

A \emph{Hamilton cycle} in a graph or digraph is a cycle
passing through every vertex of the graph exactly once, and a graph
is \emph{Hamiltonian} if it contains a Hamilton cycle. Hamiltonicity
is one of the most central notions in graph theory, and has been
intensively studied by numerous researchers in recent decades.

One of the first, and probably most celebrated, sufficient conditions
for Hamiltonicity in graphs was established by Dirac \cite{Dirac} in 1952. He proved that every graph on $n$ vertices, $n\ge 3$, with minimum
degree at least $n/2$ is Hamiltonian. Ghouila-Houri \cite{Ghouila} proved an analogue of Dirac's theorem for digraphs by showing that any digraph of minimum \emph{semi-degree} at least $n/2$ contains an oriented Hamilton cycle (the semi-degree of a digraph $G$, denoted $\delta^0(G)$, is the minimum of all the in- and out-degrees of vertices of $G$).

Instead of studying ``consistently oriented" Hamilton cycles in digraphs, it is natural to consider Hamilton cycles with arbitrary orientations. This problem goes back to the $80$s where Thomasson \cite{Thom} showed that each regular tournament contains every orientation of a Hamilton cycle. Later on, H\"aggkvist and Thomason \cite{Hagg} showed an approximate analog of the result of
Ghouila-Houri \cite{Ghouila} while proving that $\delta^0(G)\geq n/2+n^{5/6}$ is sufficient to guarantee every orientation of a Hamilton cycle to appear in $G$. Very recently, this problem has been settled completely by DeBiasio, K\"uhn, Molla, Osthus and Taylor \cite{Debiasio}. They showed that $\delta^0(G)\geq n/2$ is enough for all cases other than an \emph{anti-directed} Hamilton cycle, where for the latter, Debiaso and Molla showed in \cite{Debiasio1} that $\delta^0(G)\geq n/2+1$ is enough (an anti-directed Hamilton cycle is a cycle with no two consecutive edges having the same orientation).

In this paper we restrict our attention to the sparse setting, that is, to \emph{random directed graphs}. Let $\mathcal D(n,p)$ be the probability space consisting of all directed graphs on vertex set
$[n]$ in which each possible arc is added with probability
$p$ independently at random.

 One of the first results regarding Hamilton cycles in random directed graphs was obtained by McDiarmid in \cite{McDiarmid}.  He showed (among other things) by using an elegant coupling argument that
$$\Pr[G\sim \mathcal G(n,p) \textrm{ is
    Hamiltonian}]\leq \Pr[D\sim \mathcal D(n,p) \textrm{ is
    Hamiltonian}].$$ Combined with the result of Bollob\'as
\cite{bollobas1984evolution} it follows that a typical $D\sim
\mathcal D(n,p)$ is Hamiltonian for $p\geq \frac{\ln n+\ln\ln
    n+\omega(1)}n$.
Frieze \cite{frieze1988algorithm} later determined the correct threshold for the appearance of a Hamilton cycle in $D\sim \mathcal D(n,p)$ is $p=\frac{\ln n+\omega(1)}{n}$.

While Frieze's result gives a better bound than McDiarmid's coupling argument, the latter is much more flexible (for some further applications, see \cite{Closing gaps}).
For example, given an arbitrary oriented Hamilton cycle $C$, it follows immediately from McDiarmid's proof that
$$\Pr[G\sim \mathcal G(n,p) \textrm{ is
    Hamiltonian}]\leq \Pr[D\sim \mathcal D(n,p) \textrm{ contains
    a copy of }C].$$
    In contrast, the result of Frieze is tailored to ``consistently oriented" Hamilton cycles and gives no improvement for the obtained bound of $p=\frac{\ln n+\ln\ln n+\omega(1)}{n}$. It may be interesting to find the exact threshold for the appearance of an arbitrary oriented Hamilton cycle and we conjecture the following:
\begin{conjecture}
  Let $C$ be a Hamilton cycle oriented arbitrarily, then a digraph $D\sim \mathcal D(n,p)$ w.h.p.\ contains a copy of $C$, provided that $p=\frac{\ln n+\omega(1)}{n}$.
\end{conjecture}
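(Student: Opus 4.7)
My plan is to adapt Frieze's proof of the Hamiltonicity threshold in $\mathcal{D}(n,p)$ by augmenting it with an absorption argument tailored to the prescribed orientation pattern of $C$. I would reveal $D \sim \mathcal{D}(n,p)$ in two rounds, $D = D_1 \cup D_2$, with $p_1 = (\ln n + \tfrac12\omega(1))/n$ and $p_2$ chosen so that $(1-p_1)(1-p_2) = 1 - p$, reserving $D_2$ for a final closing/absorbing phase.

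The first step is a degree-sequence analysis. A first-moment computation shows that at this density w.h.p.\ every vertex in $D$ has both in-degree and out-degree at least $1$, and moreover no vertex has both its in-degree and out-degree equal to $1$; the set $T$ of \emph{tight} vertices (with in-degree or out-degree equal to $1$) satisfies $|T| = O(\log n)$. Each position of $C$ has a profile in $\{(0,2),(1,1),(2,0)\}$ counting the incident outgoing and incoming cycle-edges. A tight out-vertex of $D$ must map to a $(2,0)$- or $(1,1)$-position and symmetrically on the in-side, and a Hall-type argument using $|T| = o(n)$ (together with the identity $\#(0,2) = \#(2,0)$ forced by any cycle orientation) shows that a valid assignment of tight vertices to compatible profile positions always exists.

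The main construction would then proceed in three steps: (i) use a small portion of $D_1$ to build an \emph{absorbing path} $P_{\text{abs}}$ whose oriented pattern matches a segment of $C$ and which, by the standard absorbers-plus-connectors paradigm, can swallow any small set of remaining vertices while maintaining the required orientation; (ii) using the remainder of $D_1$, greedily grow $P_{\text{abs}}$ into an almost spanning oriented path respecting the rest of the pattern of $C$, falling back on an oriented rotation operation whenever a greedy extension step fails; (iii) use the reserve $D_2$ to close the cycle and to absorb the $o(n)$ leftover vertices into $P_{\text{abs}}$.

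The principal obstacle is step (ii). The classical P\'osa rotation reverses a segment of the current path, which in the oriented setting flips the orientations of its edges and so destroys the prescribed pattern of $C$. For a consistently oriented target (Frieze's case) the reversed segment is still consistently oriented -- just read backwards -- so the argument goes through; for a general orientation no such symmetry is available. Designing an oriented rotation that preserves the prescribed pattern -- for instance by rotating only at palindromic substrings of the orientation pattern, or by replacing rotation with a direct absorption into profile-balanced buckets -- appears to be the principal technical hurdle, and in my view is the reason the conjecture remains open in its stated form.
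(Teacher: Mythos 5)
This statement appears in the paper as an open \emph{conjecture}, not as a theorem; the paper offers no proof of it, and neither do you. You correctly recognize this: rather than claiming a complete argument, you lay out a plausible two-round absorption scheme and then identify the obstruction in step (ii). That obstruction --- that a P\'osa rotation reverses a path segment and therefore flips its edge orientations, so the rotation-extension machinery behind Frieze's threshold for consistently oriented Hamilton cycles has no direct analogue for a general orientation pattern --- is exactly the difficulty the paper itself highlights when it remarks that ``the so called Pos\'a rotation-extension technique does not work in its simplest form.'' So your assessment is aligned with the authors', and your writeup is a sensible summary of why the conjecture is open, not a proof of it.

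Two smaller points on the degree-sequence sketch. First, the heuristic $|T| = O(\log n)$ for the set of vertices with in- or out-degree exactly $1$ is not quite pinned down: at $p = (\ln n + c(n))/n$ the expected size of $T$ is roughly $(\ln n + c)/e^{c}$, which ranges from $o(1)$ to nearly $\ln n$ depending on how slowly $c(n)\to\infty$. What the Hall-type matching argument actually needs is $|T| = o(n)$, which does hold; you should state that rather than a sharper bound that may fail. Second, the claim that no vertex has both in- and out-degree $1$ simultaneously is correct by a second-moment/union-bound computation, but that alone does not resolve the matching of tight vertices to cycle positions when the orientation $\sigma$ is extremal (e.g.\ anti-directed, where every position is a $(1,1)$-position and there are no $(2,0)$- or $(0,2)$-positions to exploit); this case deserves separate treatment in any serious attempt. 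None of this affects the main conclusion: the key idea --- an orientation-preserving replacement for rotation, or an absorber design that sidesteps rotation entirely --- is still missing, and that is the genuine gap both here and in the literature.
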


Another recent result worth mentioning was given by Ferber, Nenadov, Peter, Noever and {\v{S}}koric in \cite{ferberrobust}. Here it was proven using the ``absorption method" that $D\sim \mathcal D(n,p)$ is w.h.p. Hamiltonian even if an adversary deletes roughly one half of the in- and out-degrees of all the vertices, provided that $p\geq polylog n/n$.

 Here we deal with the problems of counting and packing arbitrary oriented Hamilton cycles in $D\sim \mathcal D(n,p)$, for edge-densities $p\geq polylog(n)/n$. The analogous problems regarding the ``consistently oriented" Hamilton cycles has been recently treated by Kronenberg and the authors in \cite{FerKroLong}. However, the proof method there is inapplicable to the arbitrary oriented case.

Enhancing a recent ``online sprinkling" technique introduced by Ferber and Vu \cite{FV}, we manage to tackle these two problems. Our first theorem gives an asymptotically optimal result for packing arbitrarily oriented Hamilton cycles in $D \sim {\cal D}(n,p)$.

\begin{theorem}
    \label{thm: packing theorem}
    Let $\epsilon >0$ and $p(n)\in (0,1]$. Let
    $t = (1 - \epsilon )np$ and suppose that $C_1,\ldots , C_t$ are $n$-vertex
    cycles with arbitrary orientations. Then w.h.p.
    $D \sim {\cal D}(n,p)$ contains edge disjoint copies of $C_1,\ldots ,C_t$,
    provided $p \gg {\log ^3n}/{n} $.
\end{theorem}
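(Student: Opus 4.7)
The plan is to use the online sprinkling technique of Ferber--Vu, adapted to the oriented setting. Couple $D\sim\mathcal{D}(n,p)$ with independent $\mathrm{Unif}[0,1]$ marks $\{X_{uv}\}$ on ordered pairs, putting $(u,v)\in E(D)$ iff $X_{uv}\le p$. Split $p=p_0+p_1$ with $p_0=(1-\eps/2)p$ and $p_1=\eps p/2$: a pair is a \emph{main-pool} arc when $X_{uv}\in[0,p_0]$ and a \emph{reserve} arc when $X_{uv}\in(p_0,p]$. We process the cycles $C_1,\ldots,C_t$ sequentially, \emph{almost-embedding} each one using only main-pool arcs unused by previous cycles, and then expose the reserve to \emph{close up} all cycles simultaneously.

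Decompose each $C_i$ into its maximal monotone runs: maximal directed subpaths of $C_i$, glued cyclically at vertices which alternate between sources and sinks. An embedding of $C_i$ amounts to producing a cyclic packing of $[n]$ into directed paths of the prescribed lengths, together with a correctly oriented arc at each glueing vertex. In the almost-embedding phase we build, for each $C_i$, a vertex-disjoint collection of directed paths of total length $(1-o(1))n$ realizing the correct orientation pattern, leaving $O(\log n)$ short stubs to be handled by the reserve. This is carried out by a P\'osa-style extension procedure along each run: at the head of a partial directed path we reveal main-pool marks greedily, extending whenever a successful arc appears that is not yet used by $C_1,\ldots,C_{i-1}$. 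By online sprinkling, each unrevealed pair is conditionally a main-pool arc with probability exactly $p_0$, independently of the history, so a Doob-martingale concentration argument (as in Ferber--Vu) shows that w.h.p.\ every cycle's almost-embedding succeeds.

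In the closure phase we expose the reserve, which conditional on the history is still distributed as independent Bernoulli arcs of density on the order of $p_1$ on every pair. For each cycle we must join $O(\log n)$ stub pairs using reserve arcs of the correct orientation, edge-disjointly across cycles. Since each cycle demands only $O(\log n)$ reserve arcs while the reserve contains $\Omega(\eps n^2 p)$ independent arcs, a Hall-type matching argument (or a greedy random matching in the appropriate bipartite digraph) supplies the required arcs for each cycle, and a union bound over $i\le t$ finishes the proof.

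The main obstacle is to verify the extension argument for arbitrary orientations, particularly in the presence of very short monotone runs such as in an anti-directed cycle, where every run has length $1$. The workaround is to allow each run to be built up with short gaps of polylogarithmic length, left for the reserve to complete; this reduces the almost-embedding to a collection of short extension subproblems, each solved by the standard directed extension--rotation technique in $\mathcal{D}(n,p_0)$. A second technical point is load-balancing: no vertex may be overloaded with source or sink roles across the $t$ cycles. A random-placement argument shows that a load-balanced assignment of each cycle pattern to $[n]$ exists, since the average number of out-arcs used at any vertex is $t=(1-\eps)np$, well within the $(1-o(1))np$ out-arcs typically available in $D$; Chernoff-type fluctuations $O(\sqrt{t\log n})$ are negligible compared to the slack $\eps np$.
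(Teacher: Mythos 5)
Your high-level plan (online sprinkling in the Ferber--Vu style, splitting $p$ into a main pool and a reserve, almost-embedding each cycle from the main pool and closing up with the reserve) matches the paper's strategy. However, there are two genuine gaps in the details which the paper resolves quite differently.

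First, your decomposition of $C_i$ into maximal monotone runs and "P\'osa-style extension-rotation along each run" is exactly the wrong move for arbitrary orientations, and you recognize the problem yourself: in the anti-directed case every run has length $1$, so there are no runs to extend and the number of "stubs" is $\Theta(n)$, not $O(\log n)$. The proposed workaround of leaving "short gaps of polylogarithmic length" does not dissolve this, since a gap cannot be shorter than a run and $\Theta(n)$ gaps of length $1$ is still $\Theta(n)$ arcs to be supplied by the reserve per cycle, for $t=(1-\epsilon)np$ cycles. The paper never decomposes into runs and never rotates. It runs a single greedy one-vertex-at-a-time extension along a fixed long subpath $P_i$ of $C_i$ of length $\ell=(1-o(1))n$, exposing at step $i$ an arc in direction $\sigma(i)$ with a small probability $p_{ex}$; the orientation pattern is absorbed entirely into which direction is exposed at each step. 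Because only $\ell<n$ vertices must be reached and there is no rotation, the only concentration one needs is a submartingale bound (Corollary \ref{corollary: control of submartingale}) on how often any given pair $\overrightarrow{uv}$ is exposed across the $t$ rounds (Lemma \ref{lem: control of probabilities during path embedding}(ii)), which gives the coupling of all exposed arcs into $\mathcal D(n,p_1)$. Note also that your one-shot reveal of the uniform marks $X_{uv}$ creates an information-exhaustion problem: a greedy extension over $t\ell$ steps would reveal on the order of $t\ell/p_0\approx n^2$ marks, after which nothing remains random. The paper's device of repeated small-probability $p_{ex}$-exposures, together with the proved bound $X_{\overrightarrow{uv}}\le p_1/p_{ex}$ on the number of exposures of any one pair, is precisely what avoids this.

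Second, the closure step is not a Hall-type matching problem. After Stage 1 each cycle has an uncovered complementary path $P_i^c$ on $|W_i|=n-\ell+2=\Theta(n/(\alpha\log n))$ vertices — polynomially many, not polylogarithmically many — and one must find an entire $\sigma$-oriented Hamilton path of $W_i$ from $x_{i,1}$ to $x_{i,\ell}$, for which the paper invokes Lemma \ref{finishing a long path}. A matching between $O(\log n)$ stub endpoints does not produce such a path. Moreover, edge-disjointness of the closure paths across the $t$ cycles is not a free union bound: the paper sets this up by proving the quantitative estimate $Y_{u,v}\le (1+\epsilon)t(n-\ell)^2/n^2$ (Lemma \ref{lem: control of probabilities during path embedding}(iii) plus Corollary \ref{corollary: control of submartingale}), and then partitioning the reserve $D_2$ randomly, sending each arc $\overrightarrow{uv}\in D_2$ to one of the $Y_{u,v}$ relevant cycles. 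Your "load-balancing by random placement" gestures toward this but is not made precise; without the $Y_{u,v}$ control one cannot guarantee each $F_i$ still dominates $\mathcal D(G_i,p_{in})$ with $p_{in}\gg\log|W_i|/|W_i|$, which is the hypothesis Lemma \ref{finishing a long path} actually needs. Finally, the paper also reduces to the regime $p\le n^{-2/3}$ at the outset so that $t\le\Delta=n^{1/3}$; this is what makes $\delta^0(D^{(i)})\ge n-2t=(1-o(1))n$ automatic, and it substitutes for the degree-budget accounting you sketch but do not carry out.
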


Our second result shows that given an arbitrarily oriented Hamilton cycle $C$, w.h.p.
$D\sim \mathcal D(n,p)$ contains the ``correct" number of copies of $C$.

\begin{theorem}
  \label{thm: Counting}
 Suppose that $C$ is an arbitrarily oriented $n$-vertex cycle. Then w.h.p. a digraph
 $D\sim \mathcal D(n,p)$ contains $(1\pm o(1))^nn!p^n$ distinct copies
 of $C$, provided $p \gg (\log \log n)\log n/n$.
\end{theorem}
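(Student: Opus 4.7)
Let $X_C$ be the number of copies of $C$ in $D$. The first moment is $\Ex[X_C] = n!\,p^n/|\mathrm{Aut}(C)|$, and since $|\mathrm{Aut}(C)|\le 2n$ we have $\Ex[X_C]=(1\pm o(1))^n n!\,p^n$. It therefore suffices to show $X_C=(1\pm o(1))^n\,\Ex[X_C]$ w.h.p.

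The upper bound follows from Markov's inequality with threshold $(1+\eta_n)^n\Ex[X_C]$: the failure probability is $(1+\eta_n)^{-n}$, which tends to zero as long as $n\eta_n\to\infty$. Choosing $\eta_n\to 0$ sufficiently slowly yields $X_C\le (1+o(1))^n n!\,p^n$ w.h.p.

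For the lower bound, the permanent-based techniques of \cite{FerKroLong} for consistently oriented cycles exploit the full cyclic symmetry of the cycle and do not transfer to arbitrary orientations. I would adapt the online sprinkling method of Ferber--Vu \cite{FV}. Fix the cyclic order $v_1,\dots,v_n$ of $C$ and construct embeddings $\pi:V(C)\to[n]$ vertex by vertex, exposing the arcs of $D$ lazily. At step $i$ each length-$i$ partial embedding is extended by choosing $\pi(v_{i+1})$ to be any yet-unused vertex connected to $\pi(v_i)$ by an arc of $D$ whose direction matches the orientation of $v_iv_{i+1}$ in $C$. Because the lazy exposure never queries an arc twice, the number of extensions at each step is a fresh $\mathrm{Bin}(n-i,p)$, which concentrates at $(1-o(1))(n-i)p$ by Chernoff whenever $(n-i)p=\omega(\log n)$. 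Iterating $i=1,\dots,n-1$, multiplying by the $n$ choices for $\pi(v_1)$, and verifying the closing arc produces $(1-o(1))^n\,n!\,p^n$ embeddings.

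The main obstacle is the final $o(n)$ stages of the construction, in which $(n-i)p=o(\log n)$ and Chernoff no longer suffices. This is handled by reserving a small sprinkling layer $D_0\subseteq D$ at the outset to build a rich family of absorbing short paths; these absorbers allow any reasonable partial embedding to be completed to a full copy of $C$ without the large multiplicative error that a naive greedy approach would produce in this regime. A further technicality is the enormous number of partial embeddings tracked simultaneously, which forces one to establish concentration at the aggregate level via a martingale/Azuma argument rather than a naive union bound. The hypothesis $p\gg (\log\log n)\log n/n$ is exactly the threshold at which the absorbing construction in $D_0$ and Chernoff-level concentration for the bulk of the extension phase can be made to coexist.
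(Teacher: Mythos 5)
Your upper bound via Markov's inequality is correct and is essentially what one should say (the paper leaves it implicit). Your overall two-phase plan---grow long oriented paths in a first layer $D_1\sim\mathcal D(n,p_1)$ and then close them to cycles in a reserved layer $D_2\sim\mathcal D(n,p_2)$---also matches the paper's structure, and you correctly identify the two obstacles: (a) Chernoff concentration fails in the final $o(n)$ steps, and (b) one cannot simply union-bound over the huge family of partial embeddings.

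Where you diverge, and where I think there is a real gap, is in \emph{how} you propose to overcome these. For the lower bound on the number of long paths, the paper does \emph{not} track the family of all partial embeddings and does \emph{not} use an Azuma/martingale argument at the ``aggregate'' level. Instead, it defines a single randomized algorithm $\mathcal F(D_1)$ that outputs one random $\rho$-path, shows that a suitable ``good'' event $E_\ell$ (all $|R_i|\ge (1-\eps)(n-i)p_1$) holds with probability $1-o(1)$, and observes that conditional on $E_\ell$ any particular path $\widetilde Q$ is output with probability at most $\prod_{i<\ell}\frac{1}{(1-\eps)(n-i)p_1}$. Summing these probabilities over the set of paths satisfying $E_\ell$ gives at least $1-o(1)$, and hence a lower bound of roughly $(1-\eps)^\ell (n)_\ell p_1^\ell$ on the number of such paths. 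This ``probability-mass'' trick entirely circumvents the concentration problem you flag: you never need a deviation inequality for the count, only a bound on the per-path output probability. Your proposed Azuma route, by contrast, would face a Lipschitz constant that can be as large as the (astronomical) number of partial embeddings sharing a common endpoint, and it is not clear it can be made to work; at minimum, you have not indicated how.

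For closing the paths, you propose an absorber construction in a reserved layer $D_0$. The paper does not use absorbers. It instead stops the greedy growth at a path $P$ of length $\ell = n - n/(\alpha\log\log n)$, leaving a linear-in-$\log$-scale gap of $n-\ell$ vertices, and closes using Lemma~\ref{finishing a long path}: in a digraph on $m$ vertices of near-full semi-degree, a random subdigraph $\mathcal D(G,q)$ with $q=\omega(\log m/m)$ w.h.p.\ contains an arbitrarily oriented Hamilton path between any two prescribed vertices (proved by McDiarmid's coupling from the undirected statement). The arithmetic then requires $p_2\gg \log n/(n-\ell)$ and $(n-\ell)!\,p^{\,n-\ell}=(1+o(1))^n$, which together force $p\gg(\log\log n)\log n/n$ with $\ell$ as above. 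Your explanation of where the $\log\log n$ factor comes from (a tradeoff between absorbers and Chernoff) does not match this; in the paper the factor comes from balancing the length of the unfinished tail against the density needed to Hamilton-connect it.

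In short: the macro-structure of your plan is right, but both of the technical devices you invoke (aggregate martingale concentration, absorbers) are replaced in the paper by cleaner and more elementary tools (the probability-mass argument for the path count, and coupling-based Hamilton-connectivity for the closing step), and your martingale step as stated is a genuine gap.
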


Before closing the introduction, let us mention that packing and counting Hamilton cycles in the undirected setting has been extensively studied by
numerous researchers. In fact, both of these problems are now completely resolved (see \cite{glebov2013number,knox2013edge, krivelevich2012optimal, kuhn2014hamilton} and their references). The main difficulty when working in the directed case, is that the so called Pos\'a rotation-extension technique (see \cite{posa}) does not work in its simplest form and therefore one should find more creative ways for generating Hamilton cycles.
\vspace{1.5mm}

\noindent \textbf{Notation:} Given a directed graph (\emph{digraph}) $D$, we write $V(D)$ for the vertex set of $D$ and $E(D)$ for the edge set $D$. Given $v \in V(D)$ we write $N^+(v) = \{u\in V(D): \overrightarrow {vu} \in E(D)\}$, the \emph{out-neighbourhood} of $v$, and let $d^+(v) = |N^+(v)|$, the \emph{outdegree} of $v$ in $D$. Similarly define $N^-(v)$ and $d^-(v)$. Let $\delta ^0(D)$ denote the \emph{semi-degree} of $D$, given by $\delta ^0(D) = \min _{v\in V(D), *\in \{+,-\}} d^*(v)$. Given $n\in {\mathbb N}$, let $D_n$ denote the \emph{complete directed graph} (or  \emph{complete digraph}) on $n$ vertices, consisting of all possible $n(n-1)$ directed edges.

A \emph{path} $P$ of length $k$ is a $(k+1)$-vertex digraph with $k$ edges, given by $P:=v_0v_1\ldots v_k$ where for each $i\in [0,k-1]$ either $\overrightarrow{v_i v_{i+1}}$ or $\overleftarrow{v_i v_{i+1}}$ is an edge of $P$. Given $\sigma:[0,k]
\rightarrow \{+,-\}$, we say that $P$ is a $\sigma$-\emph{path}, if for all $i\in [0,k-1]$ the edge $\overrightarrow{v_iv_{i+1}}$ lies in $P$ whenever $\sigma(i)=+$, and $\overleftarrow{v_iv_{i+1}}$ lies in $P$ whenever $\sigma(i)=-$. In this case we write $\sigma (P) = \sigma $. In a similar way, for $\sigma:[1,k]\rightarrow\{+,-\}$ a
$\sigma$-\emph{cycle} $C:=v_1\ldots v_kv_1$ is a $k$-vertex digraph with $k$ edges, each of the form $\overrightarrow{v_iv_{i+1}}$ or $\overleftarrow{v_iv_{i+1}}$, where each appears according to the sign of $\sigma(i)$. Given a cycle $C$ and a subpath $P$ of $C$, let $P^c$ denote the path induced by the edges of $C$ which do not lie in $P$, called the \emph{compliment} of $P$ in $C$.

Given a digraph $D$, we write ${\cal D}(D,p)$ for the probability space of random subdigraphs of $D$ obtained by including each edge of $D$ independently with probability $p$. For a graph $G$, we write $\mathcal G(G,p)$ for the analogous distribution on subgraphs of $G$. In the special case when $D = D_n$ we simply write $\mathcal D(n,p)$. Similarly for graphs we write $\mathcal G(n,p)$. When discussing events related to ${\cal D}(D,p)$ or ${\cal G}(G,p)$ we write \emph{with high probability} (w.h.p.) to denote that the event occurs with probability $1-o(1)$. We say \emph{with very high probability} (w.v.h.p.) to mean with probability $1-n^{-\omega(1)}$.

\section{Tools}

\subsection{A concentration inequality}

A sequence of random variables $0 \equiv X_0, X_1,\ldots ,X_N$ is said a submartingale if $${\mathbb E}(X_i|X_1,\ldots ,X_{i-1}) \leq 0 \text{ for all }i\in [N].$$ The next result gives a concentration bound for submartingales (see Theorem 27 in the survey of Chung and Lu \cite{Chung-Lu}, taking $\phi _i = a_i = 0$).

\begin{theorem}
    \label{thm: submartingale concentration}
    Suppose that $0 \equiv X_0,X_1,\ldots ,X_N$ is a submartingale
    which for all $i\in [N]$ satisfies
        \begin{equation*}
            {\mathbb V}ar(X_i|X_1,\ldots X_{i-1}) \leq \sigma;
            \qquad \mbox{ and } \qquad
            X_i - {\mathbb E}(X_i|X_1,\ldots X_{i-1}) \leq M.
        \end{equation*}
    Then ${\mathbb P}(X_N \geq m) \leq e^{-{m^2}/{2(N\sigma + M m/3)}}$.
\end{theorem}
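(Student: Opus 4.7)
The plan is to execute the standard exponential moment (Chernoff/Bernstein) argument. Fix a parameter $\lambda>0$, to be optimised at the end, write $Y_i=X_i-X_{i-1}$ for the $i$-th increment, and set $\mu_i:={\mathbb E}(Y_i\mid X_1,\ldots,X_{i-1})$. The submartingale hypothesis yields $\mu_i\leq 0$; the one-sided bound gives $Y_i-\mu_i\leq M$; and the variance bound reads ${\mathbb V}ar(Y_i\mid X_1,\ldots,X_{i-1})\leq\sigma$, since $X_{i-1}$ is measurable. The aim is to bound ${\mathbb E}[e^{\lambda X_N}]$ by iterating over $i$, and then to apply Markov's inequality to the event $\{e^{\lambda X_N}\geq e^{\lambda m}\}$.

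The key analytic input is the elementary inequality
\[
 e^{y}-1-y \;\leq\; \frac{y^{2}}{2(1-M/3)} \qquad \text{for all } y\leq M \text{ with } M<3,
\]
which at $y=M$ follows from the Taylor expansion $e^M-1-M=\sum_{k\geq2}M^k/k!\leq \sum_{k\geq2}M^k/(2\cdot3^{k-2}) = M^2/(2(1-M/3))$, and extends to $y\leq M$ using the monotonicity of $y\mapsto(e^y-1-y)/y^2$ on $\mathbb{R}$. Applying this with $y=\lambda(Y_i-\mu_i)\leq\lambda M$, valid whenever $\lambda M<3$, and taking conditional expectations---using ${\mathbb E}[Y_i-\mu_i\mid\cdots]=0$ together with the variance bound---gives
\[
 {\mathbb E}\!\left[e^{\lambda(Y_i-\mu_i)}\,\Big|\,X_1,\ldots,X_{i-1}\right]\leq 1+\frac{\lambda^{2}\sigma}{2(1-\lambda M/3)}\leq \exp\!\left(\frac{\lambda^{2}\sigma}{2(1-\lambda M/3)}\right).
\]
Multiplying by $e^{\lambda\mu_i}\leq 1$---which is precisely where the submartingale sign condition enters---and telescoping over $i=1,\ldots,N$ via the tower property yields
\[
 {\mathbb E}[e^{\lambda X_N}]\leq \exp\!\left(\frac{N\lambda^{2}\sigma}{2(1-\lambda M/3)}\right).
\]

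Markov's inequality now delivers ${\mathbb P}(X_N\geq m)\leq\exp(-\lambda m+N\lambda^{2}\sigma/(2(1-\lambda M/3)))$. To optimise, I would take $\lambda=m/(N\sigma+Mm/3)$; a direct computation shows $1-\lambda M/3=N\sigma/(N\sigma+Mm/3)$, so the exponent collapses to $-\lambda m/2=-m^{2}/(2(N\sigma+Mm/3))$, as required. This $\lambda$ automatically satisfies $\lambda M<3$ whenever $N\sigma>0$, so the underlying inequality is valid.

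The main subtlety, though a mild one, lies in choosing the right exponential inequality: a Hoeffding-style estimate $e^{x}\leq 1+x+x^{2}/2$ would forfeit the Bernstein correction and leave a coarser $Mm$ contribution in the denominator. It is the sharper quotient bound $(e^{x}-1-x)/x^{2}\leq 1/(2(1-x/3))$, evaluated at $x=\lambda M$, that produces the clean form $N\sigma+Mm/3$. Passing from the martingale to the submartingale case is then free, since the factor $e^{\lambda\mu_i}$ arising from a non-positive conditional mean is at most $1$ and simply drops out of the iteration.
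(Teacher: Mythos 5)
The paper does not supply its own proof of this theorem; it is quoted directly from the survey of Chung and Lu (Theorem~27 there, with $\phi_i=a_i=0$), so there is nothing in the paper to compare against. Your argument is the standard Bernstein/Freedman exponential-moment bound for submartingales, and it is correct: the passage to centred increments $Y_i-\mu_i$, the quotient inequality $(e^y-1-y)/y^2\leq 1/(2(1-c/3))$ for $y\leq c<3$ (the monotonicity of $y\mapsto (e^y-1-y)/y^2$ you invoke is a genuine fact, verifiable by showing $(y-2)e^y+y+2$ has the sign of $y^3$), the observation that $e^{\lambda\mu_i}\leq 1$ is exactly where the submartingale condition enters, the tower-property telescope, and the optimisation $\lambda=m/(N\sigma+Mm/3)$ --- which indeed satisfies $\lambda M<3$ and collapses the exponent to $-m^2/2(N\sigma+Mm/3)$ --- all check out. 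This is almost certainly the same proof as in the cited reference.
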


We will make use of the following simple corollary.

\begin{corollary}
    \label{corollary: control of submartingale}
    Suppose that $A_1,\ldots ,A_N$ are a sequence of events in a
    probability space $(\Omega ,{\mathbb P})$. Suppose that for
    all $i\in [N]$ we have ${\mathbb P}(A_i|A_1,\ldots ,A_{i-1})
    \leq q$. Then letting $E_m$ denote the event that
    at least $qN + m$ of the events $A_1,\ldots ,A_N$ occur, we
    have ${\mathbb P}(E_m) \leq e^{-{m^2}/{2(Nq + m/3)}}$.
\end{corollary}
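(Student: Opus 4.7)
The plan is to apply Theorem \ref{thm: submartingale concentration} directly to a suitable partial-sum process. Specifically, I would set $X_0 = 0$ and for $i\in [N]$ define
\[
X_i \;=\; \sum_{j=1}^{i}\bigl(\mathbbm{1}_{A_j} - q\bigr),
\]
so that the event $E_m$, that at least $qN+m$ of the $A_j$ occur, coincides exactly with $\{X_N \geq m\}$.

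Next I would verify the three hypotheses of Theorem \ref{thm: submartingale concentration}. Writing $q_i := {\mathbb P}(A_i\mid A_1,\ldots,A_{i-1})$, we have $q_i\leq q$ by assumption, so ${\mathbb E}(X_i - X_{i-1} \mid X_1,\ldots,X_{i-1}) = q_i - q \leq 0$, which is the submartingale condition as stated in the theorem. Since $X_i - X_{i-1} = \mathbbm{1}_{A_i} - q$ takes only two values, its conditional variance equals $q_i(1-q_i) \leq q$, giving $\sigma = q$. Finally, $X_i - {\mathbb E}(X_i\mid X_1,\ldots,X_{i-1}) = \mathbbm{1}_{A_i} - q_i \leq 1$, so we may take $M = 1$.

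Plugging $\sigma = q$ and $M = 1$ into Theorem \ref{thm: submartingale concentration} yields
\[
{\mathbb P}(E_m) \;=\; {\mathbb P}(X_N\geq m) \;\leq\; e^{-m^2/(2(Nq + m/3))},
\]
as required. There is no genuine obstacle here; the only mild subtlety is making sure that conditioning on $X_1,\ldots,X_{i-1}$ is at least as informative as conditioning on $A_1,\ldots,A_{i-1}$ for the quantities we need to bound (which it is, since the former is a deterministic function of the latter up to step $i-1$), so the hypothesis $q_i\leq q$ and the resulting variance and increment bounds transfer without loss.
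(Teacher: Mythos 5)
Your proof is correct and takes essentially the same route as the paper's, applying Theorem~\ref{thm: submartingale concentration} with $\sigma = q$ and $M = 1$ to the centered indicators of the $A_i$. The only cosmetic difference is that the paper sets $X_i$ to be the individual centered indicator of $A_i$ (so the stated hypothesis $\mathbb{E}(X_i\mid X_1,\ldots,X_{i-1})\le 0$ matches the text literally), whereas you take $X_i$ to be the running partial sum (so the conclusion $\mathbb{P}(X_N\ge m)$ matches literally); both readings sensibly resolve the same minor notational conflation in Theorem~\ref{thm: submartingale concentration} between a supermartingale and its difference sequence.
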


\begin{proof}
    Simply take $Y_i$ to be the indicator random variable for the
    event $A_i$, and set $X_i = Y_i -q$ for all $i\in [N]$, and $X_0 \equiv 0$.
    Then $${\mathbb E}(X_i|X_1,\ldots ,X_{i-1}) =
    {\mathbb P}(A_i| A_1,\ldots ,A_{i-1}) - q \leq 0 \text{ for all }i\in [N],$$ and so $X_0,\ldots ,X_N$ forms a submartingale.
    We also have
$${\mathbb V}ar(X_i|X_1,\ldots X_{i-1}) \leq q
    \text{ for all }i.$$ Taking $M = 1$ and $\sigma = q$,
    Theorem \ref{thm: submartingale concentration}
    gives
        \begin{equation*}
            {\mathbb P}\big (E_m)
                =
            {\mathbb P}\big (\sum _{i=1}^{N} Y_i > nq + m \big )
                =
            {\mathbb P} \big (\sum _{i=1}^{N} X_i > m \big )
                \leq
            e^{-{m^2}/{2(Nq + m/3)}},
        \end{equation*}
    as required.
\end{proof}

\subsection{Completing paths into a Hamilton cycle}

We will also use the following result to complete paths into Hamilton cycles
in $D\sim \mathcal D(n,p)$.

\begin{lemma}
  \label{finishing a long path}
Let $\sigma \in \{+,-\}^n$. Suppose that $G$ is an $n$-vertex digraph with
$\delta ^0(G) = (1-o(1))n$. Then a digraph $D\sim {\mathcal D}(G,p)$ contains a $\sigma $-path $Q$ between \emph{any} two distinct vertices in $D$, provided $p=\omega(\log n/n)$.
\end{lemma}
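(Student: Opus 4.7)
Fix distinct vertices $u,v \in V(G)$ and the orientation $\sigma$; I will show that with probability $1-n^{-\omega(1)}$ the random digraph $D$ contains a $\sigma$-path from $u$ to $v$, and then union bound over the $n(n-1)$ ordered pairs $(u,v)$. The high-level strategy is sprinkling plus absorption. Write $p = 1 - (1-p_1)(1-p_2)$ with $p_1,p_2 = \omega(\log n/n)$, so that $D = D_1 \cup D_2$ for independent copies $D_i \sim \mathcal{D}(G,p_i)$.

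First I would use $D_1$ to greedily build a $\sigma$-path from $u$, exposing the $\sigma(i)$-directed $D_1$-neighbourhood of the current endpoint one step at a time. Since $\delta^0(G) = (1-o(1))n$, after $i$ extensions the number of available (unused) $\sigma(i)$-neighbours in $D_1$ is distributed as $\mathrm{Bin}\bigl((1-o(1))n - i,\ p_1\bigr)$; by Chernoff this is $\omega(\log n)$ — in particular nonzero — with probability $1 - n^{-\omega(1)}$, as long as $n-i = \omega(\log n/p_1)$. Running the greedy for about $n-m$ steps with $m = O(\log^2 n/p_1)$ therefore succeeds with probability $1-n^{-\omega(1)}$ and produces a $\sigma$-subpath $P$ from $u$, leaving a small residual vertex set $U$ of size roughly $m$ plus the target $v$.

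The main obstacle is the endgame: completing $P$ by a $\sigma$-coherent tail passing through every vertex of $U$ and terminating at $v$. The restriction of $D$ to $U \cup \{v\}$ is far too sparse for naive greedy extension to hit all of $U$. I would handle this by the absorption method, adapted to oriented paths. Before the greedy phase, use $D_2$ to install $\Theta(n)$ absorber gadgets inside the eventual path $P$: for each vertex $x \in V(G)$ and each of the four possible local orientation patterns $(\sigma(j-1), \sigma(j)) \in \{+,-\}^2$, a fixed constant-size gadget (say, a short $\sigma$-subpath on vertices $a,b,c,d$ together with additional edges that permit $x$ to be spliced in as $a,b,x,c,d$ while preserving all orientations) allows $P$ to swallow $x$ by a local rerouting. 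A calculation using $\delta^0(G) = (1-o(1))n$ and Chernoff shows each $x$ has $n^{1-o(1)}$ candidate gadgets in $D_2$; a Hall-type matching argument reserves a disjoint gadget for each $x$, and the greedy bulk is arranged to embed the reserved gadgets into $P$. When the bulk is complete, each vertex of $U$ is absorbed into its dedicated gadget.

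Finally the modified path covers $V(G) \setminus \{v\}$, and its last vertex must be joined to $v$ by the final $\sigma$-directed edge. Since any single edge lies in $D$ only with probability $p$, the construction is arranged so that the final endpoint has a pool of $\omega(\log n)$ possibilities — for example by leaving the last few greedy choices to be made against independent, not-yet-exposed randomness of $D_2$ — so that w.v.h.p.\ at least one admits the required edge to $v$. The technical heart of the argument lies in designing the absorber gadgets so that they are simultaneously flexible enough to handle any local orientation pair and common enough in the sparse random subdigraph $D_2$.
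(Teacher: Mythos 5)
Your route is genuinely different from the paper's, and as written it does not go through. The paper does not reprove Hamilton-connectedness from scratch: it passes to the symmetric part $G'$ of $G$ (which still has $\delta^0(G')=(1-o(1))n$), observes that the corresponding undirected random graph $\mathcal{G}(H,p)$ with $\delta(H)=(1-o(1))n$ is w.h.p.\ Hamilton-connected for $p=\omega(\log n/n)$ (a standard consequence of P\'osa-type arguments), and then upgrades this to the oriented statement via McDiarmid's coupling: since a $\sigma$-path uses at most one direction of each pair, $\Pr[\mathcal{G}(H,p)\text{ has a Hamilton }u\text{-}v\text{ path}]\le \Pr[\mathcal{D}(G',p)\text{ has a }\sigma\text{-path from }u\text{ to }v]$. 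This one-line reduction sidesteps the entire absorption machinery you set up.

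The two substantive gaps in your sketch are the following. First, the absorber you describe does not respect $\sigma$. Splicing $x$ into the path as $a,b,x,c,d$ lengthens the path by one, so every edge after the insertion point shifts by one index; the orientation then required at each later position becomes $\sigma(j+1)$ where $\sigma(j)$ was expected. The constraint is not the ``local orientation pair'' $(\sigma(j-1),\sigma(j))$ at all — it is the entire shifted tail of $\sigma$, together with bookkeeping of how many absorptions occur upstream. For a general $\sigma$ a constant-size ``swallow $x$ in place'' gadget simply does not exist; one must design gadgets tied to the specific positions and tails of $\sigma$, which you have not done. Second, the count ``each $x$ has $n^{1-o(1)}$ candidate gadgets in $D_2$'' is quantitatively false near the threshold. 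Any absorber uses at least two fresh random edges incident to $x$ (e.g.\ $\overrightarrow{bx},\overrightarrow{xc}$), and there are only $O(n)$ slots along the path in which it could sit, so the expected number of usable constant-size absorbers for a fixed $x$ is $O(np_2^2)$, which is $o(1)$ when $p_2=(\log n\cdot\omega(1))/n$ with $\omega(1)$ growing slowly. In this density regime absorbers must be built from poly-logarithmically many vertices (as in the random-digraph absorption literature), and the Hall-type matching argument you gesture at would then need to be carried out at that larger scale. As it stands, the proposal is a plausible programme rather than a proof, and the paper's coupling reduction is both correct and far shorter.
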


The analogue of Lemma \ref{finishing a long path} is well known for undirected graphs states that w.h.p. $G \sim {\cal G}(n,p)$ contains a Hamiltonian path between any two distinct vertices $a, b \in V(G)$, provided $p = \omega (\log n/n)$. (In fact, this result follows from many of the proofs of Hamiltonicity of $G \sim {\cal G}(n,p)$.) This undirected result can be `upgraded' to the directed world to prove Lemma \ref{finishing a long path} using the nice coupling argument of McDiarmid \cite{McDiarmid} mentioned in the introduction.

\section{Packing arbitrarily orientated Hamilton cycles in ${\cal D}(n,p)$}

In this section we prove Theorem \ref{thm: packing theorem}. The proof naturally splits into two pieces. In the first piece, which appears in the next subsection, we will describe and analyse a simple randomized embedding algorithm to generate long paths of some fixed orientation. Then, in subsection 3.2 by repeatedly running this embedding algorithm in ${\cal D}(n,p)$ we will find a large subpath from each cycle $C_i$. Combined with an additional argument to close each of these paths to a cycle, this will prove the Theorem \ref{thm: packing theorem}.

\subsection{A randomized algorithm for embedding oriented paths}

Let $D$ be an $n$-vertex digraph with $\delta ^0(D) \geq n - \Delta$.
Also let $P= v_1\cdots v_{\ell }$ be a $\sigma $-path, for some arbitrary $\sigma : [\ell - 1] \to \{+,-\}$. Our aim in this section is to describe a randomized algorithm which w.h.p. finds a copy $Q:=x_1\cdots x_{\ell}$ of $P$ in $D$ over $\ell$ rounds. Throughout the algorithm, whenever we `expose an edge' we mean to toss a biased coin with certain probability for this edge.
\vspace{1mm}

\textbf{Path embedding algorithm:}
\begin{enumerate}
    \item To begin, select a vertex $x_1 \in V(D)$ uniformly at random and set $Q_1 = x_1$.
    \item For $1\leq i\leq \ell-1$: suppose we are in round $i$ and that
    we  have now found $Q_{i} = x_1\cdots x_{i}$, and aim
    to extend it to $Q_{i+1}$ by finding $x_{i+1}$.
    Let $R_i = V(D) \setminus V(Q_{i})$ and select an ordering
    of $R_i$ uniformly at random, say $y_1,\ldots ,y_{n-i}$.
    \item To find $x_{i+1}$ proceed as follows. First expose $x_{i}y_1$ with an orientation corresponding to $\sigma(i)$,
    with probability
    $p_{ex}$. If this pair is exposed as an edge \underline{and}
    is an edge of $D$, set $x_{i+1} = y_1$ and $Q_{i+1} = Q_{i}x_{i+1}$.
    Otherwise expose $x_{i}y_2$ with an orientation corresponding to $\sigma(i)$,
    with probability $p_{ex}$. Again, if the exposed pair appears
    as an edge \underline{and} is an edge of $D$, set $x_{i+1} = y_2$ and
    $Q_{i+1} = Q_{i}x_{i+1}$. Continue with this process until we either find
    $x_{i+1}$ and $Q_{i+1}$, or run out of vertices in $R_i$. If this
    second case occurs, terminate the algorithm and declare a failure.
    If there is no failure and $i< \ell -1$ return to 2. for
    round $i+1$. Otherwise, proceed to 4.

    \item Output $Q:=Q_{\ell}$.
\end{enumerate}

To analyze the algorithm, we will be interested in the following events:
    \begin{align*}
        &F = \mbox{``the algorithm fails"};\\
        &E_{\overrightarrow {uv}}  = \mbox{``the edge }\overrightarrow{uv}
        \mbox{ is exposed during the algorithm"};\\
        &A_{{u,v}} = ``\{u,v\} \cap
        \big (V(Q) \setminus \{x_1,x_{\ell}\} \big ) = \emptyset".
    \end{align*}

The following lemma collects a number of key properties of this embedding process.

\begin{lemma}
        \label{lem: control of probabilities during path embedding}
    Let $D$ be a digraph with $\delta ^0(D) \geq n - \Delta $ and
    let $P$ be a path of length $\ell $.
    Suppose $p_{ex}$ and $\ell $ satisfy ${\log n}/(n-\ell - \Delta ) \ll
    p_{ex} \ll \min \big \{  \frac{(n-\ell )^2}{n^2 \Delta } ,
    \frac{1}{(n\Delta )^{1/2}} \big \}$.
    Then running the path embedding algorithm with $p_{ex}$
    to find a copy of $P$ in $D$, we have:
        \begin{enumerate}[(i)]
            \item $\Pr[F]=o\left({n^{-2}}\right)$.
            \item $\Pr\left[E_{\overrightarrow {uv}}\right] \leq
            \frac{1 + o(1)}{np_{ex}}$
            for every pair $\{u,v\} \in \binom{V(D)}{2}$.
            \item $\Pr\left[A_{u,v}\right] \leq (1 + o(1))
            \big ( \frac {n-\ell }{n} \big )^2$
            for every pair $\{u,v\} \in \binom{V(D)}{2}$.
        \end{enumerate}
\end{lemma}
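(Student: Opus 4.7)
All three parts stem from one structural observation: conditional on the history $Q_i$ and on success of round $i$, the next vertex $x_{i+1}$ is uniformly distributed over the set $G_i := \{y \in R_i : \text{the } \sigma(i)\text{-oriented edge between } x_i \text{ and } y \text{ lies in } D\}$, which satisfies $|G_i| \geq n - i - \Delta$. This follows by symmetry of the random ordering together with independence of the exposure coins. Part~(i) is then immediate: round $i$ fails only if all $\geq n - \ell - \Delta$ independent coins at good positions turn up ``don't expose'', giving conditional failure probability at most $(1-p_{ex})^{n-\ell-\Delta} \leq e^{-p_{ex}(n-\ell-\Delta)}$; a union bound over the $\leq \ell$ rounds combined with the hypothesis $p_{ex} \gg \log n/(n-\ell-\Delta)$ yields $\Pr[F] = o(n^{-2})$.

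For part~(ii), note that $\overrightarrow{uv}$ can be exposed only in a round with $(x_i, \sigma(i)) \in \{(u,+),(v,-)\}$. Focusing on the first case and conditioning on $x_i = u$, $v \in R_i$, $\sigma(i) = +$: if $v$ sits at position $k$ of the random ordering of $R_i$, at least $\max(0,k-1-\Delta)$ of the first $k-1$ positions are good, so the conditional probability of reaching $v$ is $\leq (1-p_{ex})^{\max(0,k-1-\Delta)}$. Averaging over $k$ uniform in $\{1,\ldots,n-i\}$ and multiplying by $p_{ex}$ gives $\Pr[\overrightarrow{uv} \text{ exposed in round } i \mid \cdot] \leq (p_{ex}\Delta + 1)/(n-i)$. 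Summing over $i$ using $\sum_i \Pr[x_i = u] \leq 1$ (disjointness of $\{x_i = u\}$) and adding the symmetric case yields $\Pr[E_{\overrightarrow{uv}}] \leq 2(p_{ex}\Delta + 1)/(n-\ell)$. The hypotheses on $p_{ex}$ imply $np_{ex}(p_{ex}\Delta + 1) = o(n-\ell)$, which converts this to the claimed $(1+o(1))/(np_{ex})$ bound.

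For part~(iii), expand $\Pr[A_{u,v}] = \prod_{i=2}^{\ell-1} \Pr[x_i \notin \{u,v\} \mid x_2,\ldots,x_{i-1} \notin \{u,v\}]$. When both $u,v \in G_{i-1}$, uniformity of $x_i$ on $G_{i-1}$ gives $\Pr[x_i \in \{u,v\} \mid Q_{i-1}] = 2/|G_{i-1}| \geq 2/(n-i+1)$. Absorbing the small probability that $u$ or $v$ is momentarily ``bad'' (which forces $x_{i-1}$ into a set of size $\leq \Delta$, unlikely given the quasi-uniform marginal of $x_{i-1}$) into a $1-o(1)$ factor using the hypothesis $p_{ex} \ll (n-\ell)^2/(n^2\Delta)$, the product telescopes to
\[ \Pr[A_{u,v}] \leq (1+o(1)) \prod_{i=2}^{\ell-1}\Big(1-\frac{2}{n-i+1}\Big) = (1+o(1))\frac{(n-\ell)(n-\ell+1)}{(n-1)(n-2)} \leq (1+o(1))\Big(\frac{n-\ell}{n}\Big)^2. \]

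The main technical difficulty is part~(iii): uniformly controlling the ``bad-vertex'' corrections across all $\ell$ rounds so that the product is not inflated beyond a $1+o(1)$ factor. This requires an iterative estimate of the quasi-uniform marginal of each $x_{i-1}$ (supported on a set of size $\geq n-i-\Delta$) together with careful use of the hypothesis $p_{ex} \ll (n-\ell)^2/(n^2\Delta)$.
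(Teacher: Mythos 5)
Your part (i) matches the paper's argument and is fine. Parts (ii) and (iii) have genuine gaps, and the one in (ii) is worth examining closely because you present it as complete.

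\textbf{Part (ii).} The key error is the ``multiplying by $p_{ex}$'' step. Recall that $E_{\overrightarrow{uv}}$ is the event that the coin for $\overrightarrow{uv}$ is \emph{tossed}, not that it comes up heads (the paper makes this explicit via $\Pr[C_{\overrightarrow{uv}}] = p_{ex}\cdot \Pr[E_{\overrightarrow{uv}}]$). Once the algorithm reaches $v$'s position in round $i$, the coin \emph{is} tossed; there is no further $p_{ex}$ factor. So the correct conditional bound is
\[
\Pr\bigl[\overrightarrow{uv}\text{ exposed in round }i \mid x_i=u,\, v\in R_i,\, \sigma(i)=+\bigr] \;\leq\; \frac{1}{n-i}\sum_{k=1}^{n-i}(1-p_{ex})^{\max(0,k-1-\Delta)} \;\leq\; \frac{\Delta+1+1/p_{ex}}{n-i},
\]
not $(p_{ex}\Delta+1)/(n-i)$ -- a discrepancy of a factor $\approx 1/p_{ex}$. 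A quick sanity check with $\Delta=0$: the conditional probability is exactly $\frac{1-(1-p_{ex})^{n-i}}{(n-i)p_{ex}} \approx \frac{1}{(n-i)p_{ex}}$, not $\frac{1}{n-i}$. With the corrected per-round bound, your summing scheme (bounding $\sum_i\Pr[x_i=u]\leq 1$ and $n-i\geq n-\ell$) gives only $\Pr[E_{\overrightarrow{uv}}]\lesssim \frac{1}{p_{ex}(n-\ell)}$, which is larger than the target $\frac{1+o(1)}{np_{ex}}$ by a factor of $n/(n-\ell)$ -- unbounded in the regime used in Theorem~\ref{thm: packing theorem}. The missing ingredient is precisely the factor $\Pr[v\in R_i \mid x_i=u]\approx(n-i)/n$, which would cancel the $(n-i)$ in the denominator. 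But that factor is again a quasi-uniformity statement about the trajectory, which is not proved by your argument.

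\textbf{Part (iii).} You acknowledge the gap yourself: the ``iterative estimate of the quasi-uniform marginal of each $x_{i-1}$'' is exactly what is missing, and invoking $p_{ex}\ll(n-\ell)^2/(n^2\Delta)$ as a slogan does not substitute for the estimate. Note also that this is the same quasi-uniformity needed to close the gap in (ii), so there is a near-circularity in the plan.

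\textbf{Comparison to the paper.} The paper sidesteps the quasi-uniformity issue entirely. It reformulates the algorithm by first drawing a random $G\sim\mathcal{D}(n,p_{ex})$ and then running deterministically on $G$. The pivotal observation is a \emph{symmetry-after-conditioning} claim: conditional on a high-probability event $B$ (no coin near $\{u_1,u_2,v_1,v_2\}$ or their at most $O(\Delta)$ distinguishing neighbours comes up heads), the algorithm cannot distinguish the pairs, so $\Pr[E_{\overrightarrow{u_1v_1}}\mid B]=\Pr[E_{\overrightarrow{u_2v_2}}\mid B]$ and $\Pr[A_{u_1,v_1}\mid B]=\Pr[A_{u_2,v_2}\mid B]$. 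Combined with $\Pr[B^c]\leq(8\Delta+12)p_{ex}$, this makes all pair-probabilities equal up to an additive $O(\Delta p_{ex})$. Parts (ii) and (iii) then follow from counting the total mass globally: the number of successfully exposed edges is deterministically $\leq\ell$, and, on the no-failure event, the number of untouched pairs is exactly $\binom{n-\ell+2}{2}$. This ``symmetrize, then count the total'' trick is the idea you would need to import to make your per-round plan rigorous.
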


\begin{proof}
    We first prove \emph{(i)}. Note that the algorithm only ends in failure if
    for some $i \in [\ell -1]$ edges of orientation $\sigma(i)$ in $E(D)$
    between $x_i$ and all vertices of $R_{i}$ were exposed, but none
    appeared as an edge. Using $|R_i| \geq n - \ell$, we see that
    $$\Pr[F] \leq
    \ell (1-p_{ex})^{n-\ell - \Delta} \leq ne^{-(n-\ell - \Delta)p_{ex}} = o(n^{-2}),$$
where the last inequality holds since $p_{ex} =\omega\left( \frac {\log n}{n-\ell -\Delta }\right)$.

    To see \emph{(ii)} and \emph{(iii)} it is helpful to think of the algorithm
    as proceeding in a slightly different, but equivalent way. First select
    a random subdigraph $G$ of $D_n$, where each directed edge of $D_n$ appears independently in
    $G$ with probability $p_{ex}$. Now simply run the original algorithm
    to find a copy of $P$, but this time instead of exposing edges with probability $p_{ex}$, we add the edge if the corresponding edge is present in $G$.
    Clearly this gives an identical distribution on paths which appear as $Q$.

    Now we claim that for all vertices $u_1,u_2,v_1,v_2 \in V(D)$ with $u_i \neq v_i$
    for $i = 1,2$
        \begin{equation}
            \label{equation: examined probabilities}
            \Pr\left[E_{\overrightarrow{u_1v_1}}\right] \leq
            \Pr\left[E_{\overrightarrow{u_2v_2}}\right] + (8\Delta + 12)p_{ex},
        \end{equation}
    and that
        \begin{equation}
            \label{equation: probability differences}
            \Pr\left[A_{u_1,v_1}\right] \leq \Pr\left[A_{u_2,v_2}\right] + (8\Delta + 12)p_{ex}.
        \end{equation}
    To see this, first note that if both $u_1$ and $u_2$ have the same in and out-neighbourhoods in $D$, and $v_1$ and $v_2$ have the same in and out-neighbourhoods in $D$, then
    $\Pr \left [ E_{\overrightarrow {u_1v_1}} \right ] =
    \Pr \left [ E_{\overrightarrow {u_2v_2}} \right ]$ and
    $\Pr\left[A_{u_1,v_1}\right] = \Pr\left[A_{u_2,v_2}\right]$. The key observation to
    proving \eqref{equation: examined probabilities} and \eqref{equation: probability differences} is that conditional on a high probability event, we can assume that this `same neighbourhood' property holds.

    Concretely, let
    $$S =
    \big \{z \in V(D): \text{ at least one of the edges }\overrightarrow{yz}, \overrightarrow{zy} \text{ is not in }D \mbox { for some } y\in
    \{u_1,u_2,v_1,v_2\} \big \} .$$
That is, $S$ is the set of vertices which are not in-neighbours or out-neighbours in $D$ of at least one vertex from $\{u_1,u_2,v_1,v_2\}$. Now consider the following event
        \begin{align*}
            B = ``\mbox{no edge }\overrightarrow{yz} \mbox{ or } \overrightarrow{zy}
            \mbox{ appears in } G, \mbox { where }
             y\in \{u_1,u_2,v_1,v_2\} \mbox{ and } z \in S \cup \{u_1,u_2,v_1,v_2\}".
        \end{align*}
Note that conditional on $B$, by symmetry of the neighbourhoods of $u_1, u_2, v_1$ and $v_2$, the path embedding algorithm is equally likely to expose the
${\overrightarrow {u_1v_1}}$ and the edge ${\overrightarrow {u_2v_2}}$, i.e. $\Pr \left [ E_{\overrightarrow {u_1v_1}} \right | B] =
\Pr \left [ E_{\overrightarrow {u_2v_2}} \right | B ]$. But this gives
        \begin{align}
            \label{equation:exposing edges is roughly the same}
                \Pr\left[E_{\overrightarrow{u_1v_1}}\right]
              &=
                \Pr\left[E_{\overrightarrow{u_1v_1}}|B\right]
                \Pr[B]
                    +
                \Pr\left[E_{\overrightarrow{u_1v_1}}|B^c\right]
                \Pr[B^c]\nonumber \\
                 & \leq
                \Pr\left[E_{\overrightarrow{u_2v_2}}|B\right]
                \Pr[B]
                    +
                \Pr[B^c]\nonumber  \\
                & \leq
                \Pr\left[E_{\overrightarrow {u_2v_2}}\right] + \Pr[B^c].
        \end{align}
    Similarly, conditional on $B$, by symmetry, the pairs $\{u_1,v_1\}$ and $\{u_2,v_2\}$ are
    equally likely to be disjoint from $V(Q) \setminus \{x_1,x_{\ell }\}$ and $\Pr[A_{u_1,v_1}|B] = \Pr[A_{u_2,v_2}|B]$.
Therefore, an identical
    calculation to \eqref{equation:exposing edges is roughly the same} gives
        \begin{align*}
                \Pr[A_{{u_1,v_1}}]
             \leq
                \Pr[A_{{u_2v_2}}] + \Pr[B^c].
        \end{align*}
    But $\Pr[B^c] \leq (8\Delta + 12)p_{ex}$, as each $u \in V(D)$ has
    $\leq \Delta $ non in-neighbours, $\leq \Delta $ non out-neighbours
    in $D$ and there are at most $12$ edges between vertices in
    $\{u_1,u_2,v_1,v_2\}$ in $D$. This gives
    \eqref{equation: examined probabilities} and
    \eqref{equation: probability differences}.

    Now we can prove \emph{(ii)}. For each $\overrightarrow{uv} \in E(D)$, let
    $C_{\overrightarrow{uv}}$ denote
    the event $$C_{\overrightarrow{uv}}=``\overrightarrow{uv} \text{ gets exposed during the algorithm and }\overrightarrow{uv}\in E(G)".$$ Clearly, we have $\Pr[C_{\overrightarrow{uv}}]
    = p_{ex} \times \Pr[E_{\overrightarrow{uv}}]$. Let $X$ denote the
    random variable which counts the number of edges in $G \cap D$ which get successfully exposed.
    Using \eqref{equation: examined probabilities}, for any edge
    $\overrightarrow{uv} \in E(D)$ we have
        \begin{align}
                \label{equation:control of exposed edges probabilities}
            {\mathbb E}(X)
                &=
            \sum _{{\overrightarrow{xy}} \in E(D)}
            \Pr[C_{\overrightarrow{xy}}] \nonumber \\
                &\geq
            \sum _{{\overrightarrow{xy}} \in E(D)}
            p_{ex} \times \big ( \Pr[E_{\overrightarrow{uv}}]\nonumber - (8\Delta + 12)p_{ex} \big )\\
                &
            \geq \big (n(n-1) - \Delta n \big ) p_{ex} \times
            \big (\Pr[E_{\overrightarrow{uv}}] - (8\Delta  + 12)p_{ex} \big )
            \nonumber \\
                &
            =   (1-o(1))n^2p_{ex} \times \big ( \Pr[E_{\overrightarrow{uv}}]
            - (8\Delta  + 12)p_{ex} \big ).
        \end{align}
    However we always have $X \leq \ell $, as each successfully exposed edge in $G \cap D$ completes a
    round of the algorithm and the algorithm consists of at most $\ell$ rounds. Combined with
    \eqref{equation:control of exposed edges probabilities} this gives
    $$\Pr[E_{\overrightarrow{uv}}] \leq
    (1 + o(1))\frac {\ell }{n^2p_{ex}} + (8 \Delta + 12)p_{ex} = (1 + o(1))\frac {1}{np_{ex}},$$ since $\ell \leq n$ and
    $p_{ex} =o\left((n\Delta )^{-1/2}\right)$. By applying
    \eqref{equation: examined probabilities} again, we conclude that
    $\Pr[E_{\overrightarrow{xy}}] \leq (1 + o(1))\frac {1}{np_{ex}}$ for all distinct
    $x,y \in V(D)$, completing  \emph{(ii)}.

    Lastly, it is left to prove \emph{(iii)}. Let $Y$ denote the random variable which counts the
    number of pairs $\{u,v\}$ with $\{u,v\} \cap \big ( V(Q) \setminus \{x_1,x_{\ell }\}
    \big ) = \emptyset $. Note that we always have $Y \leq \binom {n}{2}$
    and that if $F^c$ holds then $Y = \binom {n-\ell +2}{2}$. Since by \emph{(i)} we have
    $\Pr[F]=o(1/n)$, it therefore follows that
        \begin{equation*}
            {\mathbb E}(Y)
                \leq
             \Pr[F]\binom {n}{2} +
             \Pr[F^c]\binom {n - \ell + 2}{2}
                \leq
             (1 + o(1))\frac {(n-\ell)^2}{2}.
        \end{equation*}
    But for distinct $u,v \in V(D)$, from  \eqref{equation: probability differences}
    we have
        \begin{align*}
            {\mathbb E}(Y)
                & =
            \sum _{\{x,y\} \in \binom {[n]}{2}} \Pr[A_{x,y}] \geq
            \binom {n}{2} \big (\Pr[A_{u,v}]
            - (8\Delta + 12)p_{ex} \big ).
        \end{align*}
    Rearranging, we obtain $\Pr[A_{u,v}] \leq (1 + o(1))
    \Big ( \frac {n-\ell }{n} \Big )^2 + (8\Delta +12 )p_{ex} = (1 + o(1))
    \Big ( \frac {n-\ell }{n} \Big )^2$, since $p_{ex} \ll
    \frac {1}{\Delta } \big ( \frac {n-\ell }{n} \big )^2$, as required.
\end{proof}

\subsection{Finding edge disjoint Hamilton cycles in ${\cal D}(n,p)$}

    In this subsection we prove Theorem \ref{thm: packing theorem}.
    In the proof, it will be useful to assume that $p$ is not too large
    at certain points in the argument. We will assume that $\log ^3 n/n
    \ll p \leq n^{-2/3}$. The general situation can be reduced to this
    as follows. If $p \geq n^{-2/3}$ let $p' = n^{-5/6}$ and $k = n^{5/6}p $,
    so that $n^{1/6} \ll k \leq n^{5/6}$. Then, after generating
    $D \sim {\cal D}(n,p)$, we further partition $D$ into one of $k$ subdigraphs
    $D_1,\ldots ,D_k$, where each edge $e$ is assigned to $D_i$ with
    probability $1/k$.  It is clear that each $D_i$ is distributed as
    ${\cal D}(n, p')$. Therefore, if we prove that the statement of Theorem
    holds w.v.h.p (probability $1 - o(1/n)$) when $\log ^3 n/n
    \ll p \leq n^{-2/3}$, by taking a union bound over all the digraphs $D_1, \ldots ,D_k$ above, we prove that w.h.p. for all $p \gg \log ^3 n /n$.
    \vspace{1mm}

    Suppose that $p = \alpha ^3\log ^3 n/n$. Since $\log ^3/n \ll p \leq
    n^{-2/3}$ we have $1\ll \alpha \leq n^{1/9}$. Also let
    $\ell = n - n/ \alpha \log n$, $\Delta = n^{1/3}$ and $p _{ex}
    = \alpha ^2 \log ^2 n/n$. Let $C_1,\ldots ,C_t$ be cycles as given
    in the statement. Also set $p_1 = (1-\epsilon /2)p$ and choose $p_2$
    so that $(1-p_1)(1-p_2)=1-p$. Note that $p_2 =
    (1+o_{\epsilon }(1))\epsilon p/2$. Also, from each $C_i$ we select an
    oriented subpath $P_i$ of length $\ell $, with orientation $\sigma _i$.

    Our general plan is to embed the paths $\{P_i\}_{i\in [t]}$ into $\mathcal D(n,p_1)$ by repeatedly applying the algorithm described in the previous section. We will then expose new edges with probability $p_2$, to complete each copy of $P_i$ into a copy of the cycle $C_i$. Of course, we ensure that the obtained cycles are edge-disjoint. The embedding scheme proceeds in two stages.
    \vspace{1mm}

    \noindent
    \textbf{Stage 1:} Finding edge disjoint copies of $P_1,\ldots ,P_t$ in
    $D_1 \sim {\cal D}(n,p_1)$
    \vspace{1mm}

    In this stage we give a randomized algorithm which w.h.p. finds
    edge disjoint copies of $P_1,\ldots ,P_t$ in $D_1 \sim
    {\cal D}(n,p_1)$. The algorithm exposes edges from $D_n$ in
    an online fashion with some small probability $p_{ex}=o(p_1)$. This will be carried out over a series of rounds, where in
    round $i$ we aim to create a copy of $P_i$.

    Initialize to Round 1 and proceed as follows:
        \begin{enumerate}
            \item In round $i$ we have copies of $P_1,\ldots ,P_{i-1}$,
            denoted $Q_1,\ldots ,Q_{i-1}$. Set $D^{(i)} =
            D_n \setminus {(\cup _{j<i} E(Q_j))}$.

            \item Apply the path embedding algorithm from the previous subsection
            to find a copy of $P_i$ in $D^{(i)}$, denoted by
            $Q_i = x_{i,1}\ldots x_{i,\ell }$.  If this subroutine fails, declare
            a failure and terminate the algorithm.

            \item If $i < t$ then return to 1. in round $i+1$ to find $Q_{i+1}$.
            If $i = t$ then we have succeeded in finding edge disjoint copies of
            $P_1,\ldots ,P_t$.
        \end{enumerate}
    Given distinct vertices $u,v \in V(D_n)$, consider the following random variables:
        \begin{align*}
            X_{\overrightarrow{uv}}
                 =
            \big |\big \{i\in [t]: \overrightarrow{uv} \mbox{ is exposed
            during round }i \big \}\big |; \quad
            Y_{u,v}
                 =
            \big |\big \{ i\in [t]: \{u,v\} \cap
            \big (V(Q_i) \setminus \{x_{i,1}, x_{i,\ell }\}\big ) = \emptyset \big \}
            \big |.
        \end{align*}
    We claim that the following three properties hold with probability
    $1 - o(n^{-1})$:
        \begin{enumerate}
            \item [(\emph{a})] The algorithm succeeds in finding copies of $P_1,\ldots ,P_t$;
            \item [(\emph{b})] $X_{\overrightarrow{uv}} \leq \frac {p_1}{p_{ex}}$
            for all distinct $u,v \in V(D)$;
            \item [(\emph{c})] $Y_{u,v} \leq (1 + \epsilon ) \times t \times \big ( \frac {n-\ell }{n} \big )^{2}$ for all distinct $u,v \in V(D)$.
        \end{enumerate}
    This will complete Stage 1 of the algorithm. Indeed, by (\emph{b}) w.h.p. we have
    exposed each edge with probability $p_{ex} \times X_{\overrightarrow {uv}}
    \leq p_1$. The resulting digraph can therefore be coupled as a subgraph of
    $D_1 \sim {\cal D}(n,p_1)$
    w.h.p.. By (\emph{a}) this subdigraph contains edge disjoint copies of
    $P_1,\ldots ,P_t$, as required. (Property (\emph{c}) will be required
    for Stage 2.)

    To see (\emph{a}) note that in round $i$, each vertex has at most $2$
    neighbours in each $Q_j$ for $j<i$, and therefore $\delta ^0(D^{(i)})
    \geq n- 2(i-1) \geq n - 2t \geq n - 2\Delta $. Note that
        \begin{align*}
            \frac {\log n}{n-\ell -\Delta } = \frac {\log n}{
            n/ \alpha \log n - n^{1/3}}
                \leq
            \frac {2\alpha \log ^2n}{n}
                \ll
            p_{ex}
                \ll
            p &\leq
            \min \big \{ \frac {1}{\alpha ^2 \log ^2 n (n^{1/3})} ,
            n^{-2/3} \big \}\\
             & \leq
            \min \big \{  \frac{(n-\ell )^2}{n^2 \Delta } ,
    \frac{1}{(n\Delta )^{1/2}} \big \}.
        \end{align*}
    Therefore, by Lemma \ref{lem: control of probabilities during path embedding}
    \emph{(i)} the path embedding algorithm succeeds in round $i$ with probability
    at least $1 - o(\frac {1}{n^2})$. Therefore, by a union bound, the algorithm
    succeeds in producing a copy of $P_1,\ldots ,P_t$ with probability
    $1- o(n^{-1})$.

    We now prove (\emph{b}). Given distinct
    vertices $u,v \in V(D_n)$ we have $X_{\overrightarrow{uv}} = \sum _{i\in [t]}
    X_{\overrightarrow{uv}}(i)$, where $X_{\overrightarrow{uv}}(i)$ denotes the
    indicator random variable of the event that $\overrightarrow{uv} $ is
    exposed in during round $i$. Note that from Lemma
    \ref{lem: control of probabilities during path embedding}
    \emph{(ii)}, conditional on any of choice of $D^{(i)}$, we have
    $$\Pr\left[X_{\overrightarrow{uv}}(i) = 1|D^{(i)} \right] \leq
    (1+\epsilon /4)\frac{1}{np_{ex}}.$$ By Lemma
    \ref{corollary: control of submartingale}, we have
    $$\Pr \big [X_{\overrightarrow{uv}} \geq (1+ \epsilon /2)\frac{t}{np_{ex}} \big ] \leq e^{-\epsilon ^2 t/(64np_{ex})}=
    o(1/n^3).$$
    This holds as $t/np_{ex} \geq np/np_{ex} \gg \log n$. Therefore, with probability $1 - o(n^{-1})$ we have $X_{\overrightarrow{uv}}
    \leq (1+ \epsilon /2) \frac {t}{np_{ex}} = (1+ \epsilon /2)
    \frac {(1-\epsilon )np}{np_{ex}} \leq  \frac {p_1}{p_{ex}}$ for all
    $\overrightarrow{uv} \in E(D_n)$. This proves (\emph{b}).

    Lastly, (\emph{c}) is similar to (\emph{b}). Given distinct
    $u,v \in V(D_n)$ we have $Y_{{u,v}} = \sum _{i\in [t]}
    Y_{{u,v}}(i)$, where $Y_{{u,v}}(i)$ denotes the
    indicator random variable of the event $\{u,v\} \cap \big ( V(Q_i) \setminus
    \{x_{i,1}, x_{i,\ell }\} \big ) = \emptyset $. By Lemma
    \ref{lem: control of probabilities during path embedding} \emph{(iii)},
    conditional on any choice of $D^{(i)}$, we have
    $$\Pr[Y_{{u,v}}(i) = 1|D^{(i)}] \leq (1+\frac {\epsilon }{2})
    \big (\frac {n-\ell }{n} \big ) ^2.$$
    By Corollary \ref{corollary: control of submartingale} we find $$\Pr
    \big [Y_{u,v} \geq (1+{\epsilon })\frac{t(n-\ell )^2}{n^2} \big ]
    \leq e^{-\epsilon ^2 t(n-\ell )^2/16n^2}=o(1/n^4).$$ Here we used that
    $t(n-\ell )^2 /n^2 \gg \log n$.
    By applying the union bound we obtain (\emph{c}).
    \vspace{1mm}

    \noindent \textbf{Stage 2:} Completing the copies of
    $P_1,\ldots , P_t$ to copies of $C_1,\ldots ,C_t$.
    \vspace{1mm}

    Let us suppose that in Stage 1 we found $Q_1,\ldots ,Q_t$ in $D_1 \sim
    {\cal D}(n,p_1)$, and that property (\emph{c}) above holds.
    In this stage we will prove that w.h.p. it is possible to use edges of
    $D_2 \sim {\cal D}(n,p_2)$ to complete each oriented path $Q_i$ to a copy of $C_i$ which is edge-disjoint to other $C_j$-s.

    To see this, for each $i\in [t]$ let $W_i = V(D) \setminus \{x_{i,2}, \ldots ,
    x_{i,\ell -1}\}$ (recall that $Q_i = x_{i,1}\ldots x_{i,\ell }$). Let
    $G_i$ denote the digraph on vertex set $W_i$ consisting of all directed edges
    which do not lie in the paths $P_1,\ldots P_t$. Clearly we have
    $\delta ^{0}(G_i) \geq |W_i| - 2t \geq |W_i| - 2\Delta = (1-o(1))|W_i|$. Also, by property (\emph{c}) from Stage 1
    for each $\overrightarrow {uv} \in E(G_i)$ we have $Y_{u,v} \leq
    (1 + \epsilon ) {t(n-\ell )^2}/{n^2}$.

    Now select $D_2 \sim {\cal D}(n,p_2)$, where $p_2 = (1+o(1))\epsilon p$.
    Given $D_2$, we obtain a random subdigraph $F_i$ of $G_i$ by assigning
    $\overrightarrow{uv} \in E(D_2)$ with probability $1/Y_{u,v}$ to some
    $F_i$ with $\{u,v\} \subset V(W_i)$. By (\emph{c}), each edge of
    $G_i$ appears independently in $F_i$ with probability
        \begin{equation*}
            \frac{p_2}{Y_{u,v} }
                \geq
            \frac{\epsilon pn^2}{{(1+\epsilon )t(n-\ell )^2} }
                \geq
            \frac {\epsilon n}{2(n-\ell )^2} := p_{in}.
        \end{equation*}
    Therefore the distribution of $F_i$ stochastically dominates that of
    $H_i \sim {\cal D} (G_i,p_{in})$.

    Now to complete the proof, let $P_i^c$ denote the complimentary path to $P_i$ in $C_i$.
    Using $n-\ell =  n/\alpha \log n$, we find
        \begin{equation*}
            p _{in} \geq  \frac {\epsilon \alpha  \log  n }{2(n-\ell )}
            \gg \frac {\log |W_i| }{|W_i|}.
        \end{equation*}
    Therefore we can apply Lemma \ref{finishing a long path} to obtain that with
    probability $1 - o(1/n)$ for all $i\in [t]$, the digraph $H_i$
    (and therefore also $F_i$) contains a copy of $P_i^c$ from $x_{i,1}$
    to $x_{i,\ell }$ in $W_i$, denoted $Q_i^c$. But combining
    $Q_i$ with $Q_i^c$ for each $i\in [t]$ we obtain a copy of $C_i$.
    Therefore w.h.p., for all $i\in [t]$, the digraph $Q_i \cup F_i$
    contains a copy of $C_i$.
	\vspace{1mm}
	
    Stage 1 and 2 together prove that if $D_1 \sim {\cal D}(n,p_1)$ and
    $D_2 \sim {\cal D}(n,p_2)$ then with probability at least $1 - o(n^{-1})$
    the digraph $D_1 \cup D_2$ contains edge disjoint
    copies of $C_1,\ldots ,C_t$. As $D_1 \cup D_2$ can be coupled as a subgraph
    of $D \sim {\cal D}(n,p)$. This proves that the theorem holds with
    probability $1 - o(n^{-1})$ for $\log ^3 n \ll p \ll n^{-2/3}$, and
    therefore by the reduction mentioned at the beginning,
    w.h.p. for all $p \gg \log ^3n/n$.

\section{Counting}

    In this section we prove Theorem \ref{thm: Counting}.

    \begin{proof}[Proof of Theorem \ref{thm: Counting}]
    To prove the theorem, suppose that $p = \alpha ^2(\log \log n) \log n/n$
    for some function $\alpha = \alpha (n)$ which tends to infinity
    with $n$. Let us also set $\ell = n - n/\alpha (\log \log n)$.

    Let $C$ be an $n$-vertex $\sigma $-cycle for some $\sigma \in
    \{+,-\}^n$. Let $\rho \in \{+,-\}^{\ell }$ denote the vector
    given by $\rho (i) = \sigma (i)$ for all $i\in [\ell ]$ and let
    $P$ denote the $\rho $-subpath of $C$. Let us set
    $p_1 = (1-\epsilon )p$ and $p_2 = \epsilon p$, for fixed small constant
    $\epsilon >0$. We prove that $D \sim {\cal D}(n,p)$
    contains many copies of $C$ in two stages. In the first stage we
    show that w.h.p.
    $D_1 \sim {\cal D}(n,p_{1})$ contains $(1-o_{\epsilon}(1))^nn!p^n$ copies
    of $P$. In the second stage, we expose a further random digraph
    $D_2 \sim {\cal D}(n,p_2)$ and show that w.h.p. `most' of the
    copies $Q$ of $P$ in $D_1$ extend to a copy of $C$ in $D_2 \cup
    Q$.
    \vspace{2mm}

    \noindent \textbf{Stage 1:} $D_1 \sim {\cal D}(n,p_1)$ contains
    at least $(1-3\epsilon )^nn! p^n$ copies of $P$ w.h.p.
    \vspace{2mm}

    To begin, consider the following way to select
     a random copy of $P$, denoted $Q = x_1\cdots x_{\ell }$, in
    some \emph{fixed} digraph $D$ on $n$ vertices.
        \begin{enumerate}
            \item In the first round, select a vertex $x_1 \in V(D)$
            uniformly at random and set $Q_1 := x_1$.

            \item Suppose now that we are in round $i$, for some
            $1\leq i \leq \ell -1$ and so far we have found
            $Q_i = x_1\cdots x_i$ and aim to extend it to
            $Q_{i+1}$, by selecting $x_{i+1}$. Let $R_i$
            denote the $\sigma (i)$-neighbourhood of $x_i$
            in $V(D) \setminus V(Q_i)$, i.e. $R_i = N^{\sigma (i)}
            (x_i) \cap \big ( V(D) \setminus V(Q_i) \big )$.

            \item Select a vertex uniformly at random from
            $R_{i}$ and set it equal to $x_{i+1}$ and
            $Q_{i+1} := Q_ix_{i+1}$. If no such vertex exists declare
            a failure and terminate the algorithm. If
            $i < \ell - 1$, return to 1. for round $i+1$.

            \item If $i = \ell -1$, output $Q:=Q_{\ell }$.
        \end{enumerate}
    Running this randomized algorithm results in a distribution
    on the set of all $\rho $-paths $Q$ in $D$. We will write
    ${\cal F}(D)$ for this distribution.

    We will now analyse the above algorithm while running on
    ${\cal D}(n,p_1)$. Select $D_1 \sim {\cal D}(n,p_1)$
    and $Q \sim {\cal F}(D_1)$. For each $i\in [\ell ]$, we will be
    interested in the following event:
        \begin{align*}
        E_i &= ``
             |R_j| \geq (1-\epsilon )(n-i)p_1 \mbox{ for all }j<i".
    \end{align*}
    Note that if the algorithm ended in failure, $E_{\ell}^c$ must occur. We claim that
        \begin{equation}
        \label{equation: prob succeeding through low prob events}
            \Pr
            _{\substack{D_1\sim {\cal D}(n,p_{1})\\ Q \sim {\cal F}(D_1)}}
            \big [ E_{\ell } \big ] = 1 - o(1).
        \end{equation}

    To see this, we analyse the algorithm by generating $D_1$
    in an `online fashion', exposing edges as we go. Suppose now that
    we are in round $i$ of the algorithm and have so far found
    $Q_i = x_1\cdots x_i$. Expose all edges of $D_1$ in
    direction $\sigma (i)$
    between $x_i$ and $V(D_1)\setminus V(Q_i)$. Note that under this
    process, each edge is exposed at most once, and so can be coupled
    as a subgraph of $D_1 \sim {\cal D}(n,p_1)$. Clearly with this
    process, $|R_i| \sim \Bin (n-i,p_{1})$. Therefore, by Chernoff's
    inequality, we have
        \begin{equation*}
            \Pr
            _{\substack{D_1 \sim {\cal D}(n,p_{1})\\ Q \sim {\cal F}(D_1)}}
            \big [ |R_i| < (1-\epsilon )(n-i)p_1 \mbox{ } \big |\mbox{ } Q_i \big ] \leq e^{- 2\epsilon ^2(n-i)p_1} \leq
            e^{- 2\epsilon ^2(n-\ell )p_1} = o(n^{-1}).
        \end{equation*}
    Here we have used that $$\epsilon ^2(n-\ell )p_1 \geq \epsilon ^2
    (n- \ell )p/2 \geq \alpha \log n /2 \gg \log n.$$
    However, this gives that
    $$\Pr _{\substack{D_1 \sim {\cal D}(n,p_{1})\\ Q \sim {\cal F}(D_1)}}
            \big [ E_{i+1} |\mbox{ } E_i \big ] \geq  1 - o(n^{-1}).$$
    In turn this gives \eqref{equation: prob succeeding through low
    prob events}, since $\ell \leq n$ and
            \begin{equation*}
            \Pr
            _{\substack{D_1 \sim {\cal D}(n,p_{1})\\ Q \sim {\cal F}(D_1)}}
            \big [ E_{\ell } \big ]
                \geq
            \prod _{i\in [\ell -1]}
            \Pr
            _{\substack{D_1 \sim {\cal D}(n,p_{1})\\ Q \sim {\cal F}(D_1)}}
            \big [ E_{i+1} |\mbox{ } E_i \big ]
                \geq
            \big (1 - o (n^{-1} )\big )^{\ell -1} = 1 - o(1).
        \end{equation*}

    Now note that \eqref{equation: prob succeeding through low prob events} shows that if we select $D_1 \sim {\cal D}(n,p_{1})$ then w.h.p.
        \begin{equation*}
            \Pr _{Q \sim {\cal F}(D_1)} \big [ E_{\ell } \big ]
                =
            1- o(1).
        \end{equation*}
    However, for each $\sigma $-path
    ${\widetilde Q} = x_1\cdots x_{\ell }$ in $D_1$ which
    satisfies $E_{\ell }$ we have
        \begin{equation*}
            \Pr _{Q \sim {\cal F}(D_1)} (Q = {\widetilde Q})
                =
            \prod _{i\in [\ell -1]} \frac {1}{|R_i|} \leq
            \prod _{i\in [\ell -1]} \frac {1}{(1-\epsilon )(n-i)p_{1}}.
        \end{equation*}
    Therefore, letting ${\cal Q}(D_1)$ denote the collection of all
    $\sigma $-paths in $D_1$, which satisfy $E_{\ell }$, from
    \eqref{equation: prob succeeding through low prob events} we have
        \begin{equation*}
            1 - o(1)
                \leq
            \Pr _{Q \sim {\cal F}(D_1)}
            \big [ E_{\ell } \big ]
                =
            \sum _{{\widetilde Q} \in {\cal Q}}
            \Pr _{Q \sim {\cal F}(D_1)} \big [ Q = {\widetilde Q} \big ]
                \leq
            \frac {|{\cal Q}(D_1)|}{(1-\epsilon )^{\ell } (n)_{\ell }
             p_{1}^{\ell }}.
        \end{equation*}
    Rearranging, this gives $$|{\cal Q}(D_1)| \geq
    (1-\epsilon )^n (n)_{\ell }p_{1}^{\ell } \geq
    (1-2 \epsilon )^n (n)_{\ell } p^{\ell } \geq (1-3 \epsilon )^nn! p^n.$$
    Here we used that $$(n - \ell )! p^{n-\ell } \leq ((n-\ell )p)^{ n- \ell }
    = (\alpha \log n)^{n/\alpha \log \log n}=(1 + o(1))^n.$$
    \vspace{2mm}

    \noindent \textbf{Stage 2:} Completing `most' copies of $P$
    in $D_1$ to a copy of $C$.
    \vspace{2mm}

    Let ${\cal P}$ denote the collection of all copies of $P$  in
    $D_1$. Let $P^c$ denote compliment path of $P$ in $C$ (see notation).
    Note from the bound in Stage 1, w.h.p. we have
    $|{\cal P}| \geq (1- 3\epsilon )^n n!p^n$. Let  us fix
    $Q \in {\cal P}$, which starts at $x_1$ and ends at $x_{\ell }$.
    Select $D_2 \sim {\cal D}(n,p_2)$.
    We will show that
        \begin{equation}
            \label{equation: extending P-paths}
            \Pr \big [Q \mbox{ is contained in a copy of } C \mbox { in }
            Q\cup D_2 \big ] = 1-o(1).
        \end{equation}
    To see this, set $W_{Q} := \big (V(D_1) \setminus V(Q) \big ) \cup
    \{x_1 , x_{\ell } \}$, so that $|W_{Q}| = n - \ell + 2$. But it is easy to see
    that $D_2[W_Q] \sim {\cal D}(n-\ell +2, p_2)$ (perhaps some edges also appear
    $D_1$, but this only helps us). Using that
    $$p_2 = \epsilon p = \frac{\epsilon \alpha ^2(\log \log n) \log n}{n}
    \gg \frac{\log n }{n-\ell },$$ by Lemma \ref{finishing a long path} we find that
    $D_2[W_Q]$ w.h.p. contains a $P^c$ path $Q_2$ from $x_1$ to $x_{\ell }$.
    Combined with $Q$, this gives a copy of $C$ in $Q \cup D_2$. This gives
    \eqref{equation: extending P-paths}.

    We now complete the proof of the theorem. Let ${\cal P}_{bad}$ denote the set
    of $Q \in {\cal P}$ which are \emph{not} contained in a copy of $C$ in
    $Q \cup D_2$. From \eqref{equation: extending P-paths} we have
        \begin{equation*}
                {\mathbb E}(|{\cal P}_{bad}|) = o ( |{\cal P}| ).
        \end{equation*}
     By Markov's inequality this gives that w.h.p. $|{\cal P}_{bad }| =
     o(|{\cal P}|)$. Therefore, w.h.p. there are $|{\cal P} \setminus {\cal P}_{bad}|
     = (1-o(1)) |{\cal P}| \geq (1-4\epsilon )^n n!p^n$ paths $Q$ which extend to
     a copy of $C$ in $Q \cup D_2$. As each copy of $C$ can be obtained from at most
     $2n$ such paths, this gives $|{\cal P}\setminus {\cal P}_{bad}|/2n \geq
     (1 - 5\epsilon )^n n!p^n$ copies of $C$ in   $D_1 \cup D_2$.
    \end{proof}


\begin{thebibliography}{10}

\bibitem{bollobas1984evolution}
B.~Bollob{\'a}s.
\newblock The evolution of random graphs.
\newblock {\em Transactions of the American Mathematical Society},
  286(1), 257--274, 1984.

\bibitem{Chung-Lu} F. Chung and L. Lu. Concentration inequalities and
    martingale inequalities -- a survey, \textit{Internet Math.}
    \textbf{3}(1) (2006), 79-127.

\bibitem{Debiasio}
 L. DeBiasio, D. K\"uhn, T. Molla, D. Osthus and A. Taylor.
 \newblock Arbitrary orientations of Hamilton cycles in digraphs,
 \newblock in {\em SIAM Journal Discrete Mathematics} 29 (2015), 1553--1584.

\bibitem{Debiasio1}
L. DeBiasio and T. Molla.
\newblock Semi-degree threshold for anti-directed Hamilton cycles,
\newblock {\em arXiv preprint}, arXiv:1308.0269.

\bibitem{Dirac}
G. A. Dirac.
\newblock Some theorems on abstract graphs,
\newblock in {\em Proceedings of
the London Mathematical Society}, 2 (1952), 69--81.

\bibitem{Closing gaps}
A. Ferber.
\newblock Closing gaps in problems related to Hamilton cycles in random graphs
  and hypergraphs,
\newblock {\em The Electronic Journal of Combinatorics}, 22(1):P1--61, 2015.


\bibitem{FerKroLong}
A. Ferber, G. Kronenberg and E. Long.
\newblock Packing, counting and covering Hamilton cycle in random directed graphs,
\newblock \emph{Israel Journal of Mathematics}, to appear.

\bibitem{ferberrobust}
A.~Ferber, R.~Nenadov, U.~Peter, A.~Noever, and N.~{\v{S}}koric.
\newblock Robust {H}amiltonicity of random directed graphs.
\newblock {\em SODA '14}.


\bibitem{FV}
A. Ferber and V. Vu.
\newblock Packing perfect matchings in random hypergraphs,
\newblock preprint.

\bibitem{frieze1988algorithm}
A.~M. Frieze.
\newblock An algorithm for finding {H}amilton cycles in random directed graphs.
\newblock {\em Journal of Algorithms}, 9(2):181--204, 1988.

\bibitem{Ghouila}
 A. Ghouila-Houri.
 \newblock Une condition suffisante d’existence d’un circuit Hamiltonien,
 \newblock in {\em C.R. Acad. Sci. Paris} 25
(1960), 495–-497.

\bibitem{glebov2013number}
R.~Glebov and M.~Krivelevich.
\newblock On the number of {H}amilton cycles in sparse random graphs.
\newblock {\em SIAM Journal on Discrete Mathematics}, 27(1):27--42, 2013.

\bibitem{Hagg}
R. H\"aggkvist and A. Thomason.
\newblock Oriented Hamilton cycles in digraphs,
\newblock in {\em Journal of Graph Theory} 19.4 (1995), 471--479.

\bibitem{knox2013edge}
F.~Knox, D.~K{\"u}hn, and D.~Osthus.
\newblock Edge-disjoint {H}amilton cycles in random graphs.
\newblock {\em Random Structures \& Algorithms}, 2013.

\bibitem{krivelevich2012optimal}
M.~Krivelevich and W.~Samotij.
\newblock Optimal packings of {H}amilton cycles in sparse random graphs.
\newblock {\em SIAM Journal on Discrete Mathematics}, 26(3):964--982, 2012.


\bibitem{kuhn2014hamilton}
D.~K{\"u}hn and D.~Osthus.
\newblock {H}amilton decompositions of regular expanders: applications.
\newblock {\em Journal of Combinatorial Theory, Series B}, 104:1--27, 2014.


\bibitem{McDiarmid}
C.~McDiarmid.
\newblock Clutter percolation and random graphs,
\newblock in {\em Combinatorial Optimization II}, 17--25. Springer, 1980.

\bibitem{posa}
L.~P{\'o}sa.
\newblock {H}amiltonian circuits in random graphs.
\newblock {\em Discrete Mathematics}, 14(4):359--364, 1976.

\bibitem{Thom}
A. Thomason.
\newblock Paths and cycles in tournaments,
\newblock in {\em Transactions of the American Mathematical Society}, 296(1) (1986), 167--180.


\end{thebibliography}
\end{document}